\newtheorem{theorem}{Theorem}[section]
\newtheorem{conjecture}[theorem]{Conjecture}
\newtheorem{corollary}[theorem]{Corollary}
\newtheorem{definition}[theorem]{Definition}
\newtheorem{lemma}[theorem]{Lemma}
\newtheorem{proposition}[theorem]{Proposition}
\newtheorem{remark}[theorem]{Remark}
\newenvironment{proof}[1][Proof]{\noindent\textbf{#1.} }{\ \rule{0.5em}{0.5em}}
\begin{document}

\title{Partial Euler Characteristic, Normal Generations and the stable $D(2)$
problem}
\author{Feng Ji and Shengkui Ye}
\maketitle

\begin{abstract}
We study the interplay among Wall's $D(2)$ problem, normal generation
conjecture (the Wiegold Conjecture) of perfect groups and Swan's problem on
partial Euler characteristic and deficiency of groups. In particular, for a
3-dimensional complex $X$ of cohomological dimension 2 with finite
fundamental group, assuming the Wiegold conjecture holds, we prove that $X$
is homotopy equivalent to a finite 2-complex after wedging a copy of sphere $%
S^{2}.$
\end{abstract}

\section{Introduction}

In this article, we study several classical problems in low-dimensional
homotopy theory and group theory, focusing on the interplay among these
problems.

Let us first recall the following Swan's problem. Let $G$ be a group and $%
\mathbb{Z}G$ the group ring. Swan \cite{sw} defines the partial Euler
characteristic $\mu _{n}(G)$ as follows. Let $F$ be a resolution%
\begin{equation*}
\cdots \rightarrow F_{2}\rightarrow F_{1}\rightarrow F_{0}\rightarrow 
\mathbb{Z}\rightarrow 0
\end{equation*}%
of the trivial $\mathbb{Z}G$-module $\mathbb{Z}$, in which each $F_{i}$ is $%
\mathbb{Z}G$-free on $f_{i}$ generators. For an integer $n\geq 0,$ if 
\begin{equation*}
f_{0},f_{1},f_{2},\cdots ,f_{n}
\end{equation*}%
are finite, define%
\begin{equation*}
\mu _{n}(F)=f_{n}-f_{n-1}+f_{n-2}-\cdots +(-1)^{n}f_{0}.
\end{equation*}%
If there exists a resolution $F$ such that $\mu _{n}(F)$ is defined, we let $%
\mu _{n}(G)$ be the infimum of $\mu _{n}(F)$ over all such resolutions $F.$
We call the truncated free resolution 
\begin{equation*}
F_{n}\rightarrow \cdots \rightarrow F_{1}\rightarrow F_{0}\rightarrow 
\mathbb{Z}\rightarrow 0
\end{equation*}%
an algebraic $n$-complex if each $F_{i}$ is finitely generated as a $\mathbb{%
Z}G$-module (following the terminology of Johnson \cite{Jo}).

For a finitely presentable group $G,$ the deficiency $\mathrm{def}(G)$ is
the maximum of $d-k$ over all presentations $\langle g_{1},g_{2},\cdots
,g_{d}\mid r_{1},r_{2},\cdots ,r_{k}\rangle $ of $G.$ It is not hard to see
that 
\begin{equation}
\mathrm{def(}G)\leq 1-\mu _{2}(G)  \label{eq1}
\end{equation}%
(\cite{sw}, Proposition 1). However, Swan mentions in \cite{sw} that
\textquotedblleft the problem of determining when $\mathrm{def}(G)=1-\mu
_{2}(G)$ seems very difficult even if $G$ is a finite $p$-group%
\textquotedblright .

Next, we consider Wall's $D(2)$ problem (\textit{cf.} \cite{wa}). The
cohomological dimension $\mathrm{cd}(X)$ of a CW complex $X$ is defined as
the largest integer $n$ such that $H^{n}(X,M)\neq 0$ for some $\mathbb{Z}%
[\pi _{1}(X)]$-module $M$. For a $3$-dimensional CW complex $X$ of
cohomological dimension $\mathrm{cd}(X)=2,$ Wall's $D(2)$ problem asks
whether $X$ is homotopy equivalent to a $2$-dimensional CW complex. A
positive answer to this problem will imply the Eilenberg-Ganea conjecture,
which says that a group of cohomological dimension two has a $2$-dimensional
classifying space. A finitely presentable group $G$ is said to have $D(2)$
property if any finite $3$-dimensional CW complex $X,$ of cohomological
dimension $2$ with fundamental group $G,$ is homotopy equivalent to a $2$%
-dimensional CW complex. For the status of $D(2)$ problem, see Johnson \cite%
{Jo,jo2} (see also \cite{ham, yu} for some recent work).

It is well-known that a finite perfect group $G$ is normally generated by
one element (\cite{lw},4.2). The Wiegold conjecture (\textit{cf.} \cite{bm},
FP14 and \cite{mk}, 5.52) asserts that the same holds for any finitely
generated perfect group:

\begin{conjecture}[Wiegold Conjecture]
\label{ng}Let $G$ be any finitely generated perfect group, \textsl{i.e.} $%
G=[G,G]$, the commutator subgroup of $G$. Then $G$ can be normally generated
by a single element.
\end{conjecture}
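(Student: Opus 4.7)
The plan is to attack Conjecture \ref{ng} by combining a presentation-theoretic reduction with the stable $D(2)$ machinery developed in this paper, aiming to produce an honest single normal generator.

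First, I would reduce to the finitely presented, balanced case. Let $G$ be a finitely generated perfect group on $d$ generators $x_1,\ldots,x_d$. Since $G=[G,G]$, each $x_i$ equals some $w_i \in [F_d, F_d]$ when projected to $G$. Setting
\[
\widehat G := F_d/\langle\langle x_1 w_1^{-1},\ldots, x_d w_d^{-1}\rangle\rangle
\]
produces a finitely presented, perfect group on $d$ generators and $d$ relators that surjects onto $G$; together with Swan's inequality and perfectness, these counts force $\mathrm{def}(\widehat G) = 0$, so $\widehat G$ is balanced. Because normal generation descends through surjections, it suffices to prove the conjecture for $\widehat G$.

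Next comes the topological reformulation. For balanced perfect $\widehat G$ with presentation $2$-complex $K$, we have $\chi(K) = 1$. The conjecture for $\widehat G$ is equivalent to the existence of a loop $\gamma$ in $K$ such that $K \cup_\gamma D^2$ is simply connected; combined with $\chi(K\cup_\gamma D^2)=2$, this forces $K \cup_\gamma D^2 \simeq S^2$. Thus the problem becomes: find one further relator turning $K$ into a homotopy $2$-sphere. I would build a candidate $g$ as an explicit word in the generators involving the commutator data $w_i$ that witnesses perfection (for instance, a suitably conjugated product of the $x_i$ twisted by the $w_i$), and verify $\widehat G/\langle\langle g\rangle\rangle = 1$ by analyzing the $\mathbb{Z}\widehat G$-chain complex of the universal cover of $K\cup_g D^2$ via the algebraic $2$-complex criterion.

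The hardest step will be the verification. It reduces to showing that a certain $\mathbb{Z}\widehat G$-module map is surjective, and the paper's main stable $D(2)$ theorem establishes the analogous surjectivity only after wedging $n$ copies of $S^2$. Removing these $S^2$'s amounts to the vanishing of an obstruction in $\widetilde K_0(\mathbb{Z}\widehat G)$; equivalently, establishing the equality $\mathrm{def}(\widehat G) = 1-\mu_2(\widehat G)$, which Swan himself called "very difficult even if $G$ is a finite $p$-group". My strategy to overcome this obstacle is to exploit perfectness twice: once to produce a resolution achieving $\mu_2(\widehat G) \leq 0$ through the commutator data, and once to match this with $\mathrm{def}(\widehat G) = 0$, so that the stabilization by $S^2$'s becomes unnecessary in this balanced perfect setting. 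If this $K$-theoretic vanishing can be pushed through using the stable $D(2)$ results of the earlier sections, the desired single normal generator $g$ materializes from the algebraic data; if not, the groups realizing a nonzero gap would themselves be the critical test cases for the conjecture.
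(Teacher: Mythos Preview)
The statement you are attempting to prove is Conjecture~\ref{ng}, the Wiegold normal generation conjecture. The paper does \emph{not} prove it; it is explicitly presented as a long-standing open problem and is used only as a hypothesis in the conditional results Theorem~\ref{th1} and Theorem~\ref{thq}. There is therefore no proof in the paper to compare your proposal against, and what you have written is a strategy for attacking an open problem rather than a reconstruction of an existing argument.

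That aside, the strategy itself has a structural gap. The logical flow of the paper is Conjecture~\ref{ng} $\Rightarrow$ $D(2,1)$ for finite groups (Theorem~\ref{thq}) $\Rightarrow$ the deficiency bound (Theorem~\ref{th1}); nothing runs in the reverse direction. The only \emph{unconditional} stable result, Theorem~\ref{igia}, applies solely to finite groups, exactly the case in which the conjecture is already known, so the machinery you invoke is unavailable for the infinite $\widehat G$ you construct. Moreover, your ``hardest step'' is internally inconsistent: for any perfect group the presentation $2$-complex satisfies $\chi(K)=1+b_2(K)\ge 1$, so $1-\mathrm{def}(\widehat G)\ge 1$ and hence $\mu_2(\widehat G)\ge 1-\mathrm{def}(\widehat G)$ would be needed for Swan's equality, not $\mu_2(\widehat G)\le 0$; and in any case, establishing $\mathrm{def}(\widehat G)=1-\mu_2(\widehat G)$ is precisely Swan's open question, so the proposal reduces one open problem to another without supplying a mechanism to resolve either.
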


Our main result is the following, which gives a relaxed lower bound of $%
\mathrm{def}(G)$ assuming the Wiegold conjecture.

\begin{theorem}
\label{th1}Assume that Conjecture \ref{ng} is true. Let $X$ be a finite
3-dimensional CW complex of cohomological dimension 2 with finite
fundamental group. We have the following:

\begin{enumerate}
\item[(i)] the complex $X$ is homotopy equivalent to a finite 3-dimensional
complex with just one 3-cell;

\item[(ii)] the wedge $X\vee S^{2}$ is homotopy equivalent to a finite
2-dimensional complex;

\item[(iii)] 
\begin{equation*}
1-\mu _{2}(G)\geq \mathrm{def(}G)\geq -\mu _{2}(G)
\end{equation*}%
for any finite group $G.$
\end{enumerate}
\end{theorem}

Our discussions are based on the study of a stable version of the $D(2)$
problem (for details, see Section \ref{sec}). For a group $G$ having a
finite classifying space $\mathrm{B}G$ of dimension at most $2$, we have $%
\mathrm{def}(G)=1-\mu _{2}(G)$, which confirms the equality of partial Euler
characteristic and deficiency (cf. Theorem \ref{lgba} (i)). A famous
conjecture of Whitehead says that any subcomplex of an aspherical
2-dimensional CW complex is aspherical (\textit{cf.} \cite{Bo}). As an
application of the results proved, we reprove the following (\textit{cf.}
Bogley \cite{Bo}).

\begin{corollary}
\label{ncor}A subcomplex $X$ of a finite aspherical 2-dimensional CW complex
is aspherical if and only if the fundamental group $\pi _{1}(X)$ has a
finite classifying space $\mathrm{B}\pi _{1}(X)$ of dimension at most 2.
\end{corollary}

The article is organized as follows. In Section 2, we discuss the Quillen
plus construction of 2-dimensional CW complexes. This motivates the stable
Wall's $D(2)$ property being discussed in Section 3. In the last section,
the Euler characteristics are studied for groups of low geometric dimensions.

\section{Quillen's plus construction of 2-dimensional CW complexes}

Let $X$ be a CW complex with fundamental group $G$ and $P$ a perfect normal
subgroup of $G$, \textsl{i.e.} $P=[P,P].$ Quillen shows that there exists a
CW complex $X_{P}^{+}$, whose fundamental group is $G/P;$ and an inclusion $%
f:X\rightarrow X_{P}^{+}$ such that 
\begin{equation*}
H_{n}(X;f^{\ast }M)\cong H_{n}(X_{P}^{+};M)
\end{equation*}%
for any integer $n$ and local coefficient system $M$ over $X_{P}^{+}.$ Here $%
X_{P}^{+}$ is called the plus-construction of $X$ with respect to $P$. It is
unique up to homotopy equivalence. One of the main applications of the plus
construction is to define higher algebraic $K$-theory. In general, the space 
$X_{P}^{+}$ is obtained from $X$ by attaching 2-cells and 3-cells. We need
the following definition.

\begin{definition}
The cohomological dimension $\mathrm{cd}(X)$ of a CW complex $X$ is defined
as the smallest integer $n$ such that $H^{m}(X,M)=0$ for any integer $m>n$
and any $\mathbb{Z}[\pi _{1}(X)]$-module $M$. If no such $n$ exists, the
cohomological dimension $\mathrm{cd}(X)$ is defined to be $\infty .$
\end{definition}

It is obvious that an $n$-dimensional CW complex is of cohomological
dimension at most $n$. The following well-known lemma gives a property
enjoyed by any $3$-dimensional CW complex with cohomological dimension $2$.

\begin{lemma}
\label{2.2}Suppose that $X$ is a $3$-dimensional CW complex and $\tilde{X}$
is the universal cover of $X$. Let $C_{\ast }(\tilde{X})$ be the cellular
chain complex of $\tilde{X}$. Then $X$ is of cohomological dimension $2$ if
and only if the image of $C_{3}(\tilde{X})$ is a direct summand of $C_{2}(%
\tilde{X})$ as $\mathbb{Z[}\pi _{1}(X)]$-modules.
\end{lemma}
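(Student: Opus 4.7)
The plan is to translate the cohomological dimension condition into a property of the boundary map $\partial_3 : C_3(\tilde X) \to C_2(\tilde X)$, and then observe that freeness of the cellular modules turns this property into a splitting.

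First I would write down the cellular cochain complex with local coefficients. Since $X$ is 3-dimensional, for any $\mathbb{Z}G$-module $M$ (with $G = \pi_1(X)$) one has
\begin{equation*}
H^{3}(X;M) \;=\; \mathrm{coker}\!\left(\partial_{3}^{\ast}\colon \mathrm{Hom}_{\mathbb{Z}G}(C_{2}(\tilde X),M)\to \mathrm{Hom}_{\mathbb{Z}G}(C_{3}(\tilde X),M)\right),
\end{equation*}
and $H^{m}(X;M)=0$ automatically for $m>3$. Thus $\mathrm{cd}(X)\le 2$ is equivalent to the surjectivity of $\partial_{3}^{\ast}$ for every $\mathbb{Z}G$-module $M$.

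Next I would show that the universal surjectivity of $\partial_{3}^{\ast}$ is equivalent to $\partial_{3}$ being a split monomorphism of $\mathbb{Z}G$-modules. For the nontrivial direction, one takes $M = C_{3}(\tilde X)$ and lifts the identity $\mathrm{id}_{C_{3}(\tilde X)}$ through $\partial_{3}^{\ast}$ to obtain a retraction $r\colon C_{2}(\tilde X)\to C_{3}(\tilde X)$ with $r\circ \partial_{3}=\mathrm{id}$. The converse direction is immediate: given such an $r$, any $g\in \mathrm{Hom}_{\mathbb{Z}G}(C_{3}(\tilde X),M)$ is the image of $g\circ r$. Because $C_{3}(\tilde X)$ is $\mathbb{Z}G$-free, the existence of the retraction $r$ is equivalent to the assertion that $\partial_{3}(C_{3}(\tilde X))$ is a direct summand of $C_{2}(\tilde X)$ isomorphic to $C_{3}(\tilde X)$, which is the statement of the lemma.

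I do not anticipate a serious obstacle; the only subtle point is to apply the hypothesis $\mathrm{cd}(X)\le 2$ to the specific coefficient module $M=C_{3}(\tilde X)$ in order to extract the splitting, and to note that $\partial_{3}$ is then automatically injective as a consequence. The freeness (hence projectivity) of $C_{2}(\tilde X)$ and $C_{3}(\tilde X)$, together with the fact that cellular chain groups are free $\mathbb{Z}G$-modules on the cells upstairs, is exactly what permits the translation between the cohomological condition and the direct-summand condition.
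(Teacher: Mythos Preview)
Your proposal is correct and follows essentially the same approach as the paper: both take $M=C_{3}(\tilde X)$, use $H^{3}(X;C_{3}(\tilde X))=0$ to lift the identity through $\partial_{3}^{\ast}$ and obtain a retraction, and invoke freeness of the cellular modules to conclude the direct-summand statement. Your write-up is slightly more explicit in identifying $H^{3}(X;M)$ with the cokernel of $\partial_{3}^{\ast}$ and in noting that the retraction forces $\partial_{3}$ to be injective, but the argument is the same.
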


The following result shows that for certain 2-dimensional CW complexes$,$
the Quillen plus construction is homotopy equivalent to a 2-dimensional CW
complex. Let $X$ be a finite 2-dimensional CW complex. Suppose that a
perfect normal subgroup $P$ in $\pi _{1}(X)$ is normally generated by $n$
elements. With respect these normal generators, there is a canonical
construction $Y$ for $X^{+}$ that attaches a $2$-cell bounded by each
generator and a $3$-cell to kill the resulting homology. Moreover, the
number of attached $3$-cells and the number of attached $2$-cells are both $%
n $ (\textit{cf.} the proof of Theorem 1 in \cite{ye}). The cellular chain
complex $C_{\ast }(\tilde{Y})$ of the universal cover $\tilde{Y}$ is 
\begin{eqnarray*}
0 &\rightarrow &\mathbb{Z}[\pi _{1}(Y)]^{n}\rightarrow \mathbb{Z}[\pi
_{1}(Y)]^{n}\tbigoplus C_{2}(\tilde{X})\tbigotimes\nolimits_{\mathbb{Z}[\pi
_{1}(X)]}\mathbb{Z}[\pi _{1}(Y)] \\
&\rightarrow &C_{1}(\tilde{X})\tbigotimes\nolimits_{\mathbb{Z}[\pi _{1}(X)]}%
\mathbb{Z}[\pi _{1}(Y)]\rightarrow C_{0}(\tilde{X})\tbigotimes\nolimits_{%
\mathbb{Z}[\pi _{1}(X)]}\mathbb{Z}[\pi _{1}(Y)]\rightarrow 0,
\end{eqnarray*}%
with the first map the inclusion of a direct summand. This is homotopy
equivalent to%
\begin{eqnarray*}
0 &\rightarrow &C_{2}(\tilde{X})\tbigotimes\nolimits_{\mathbb{Z}[\pi
_{1}(X)]}\mathbb{Z}[\pi _{1}(Y)]\rightarrow C_{1}(\tilde{X}%
)\tbigotimes\nolimits_{\mathbb{Z}[\pi _{1}(X)]}\mathbb{Z}[\pi _{1}(Y)] \\
&\rightarrow &C_{0}(\tilde{X})\tbigotimes\nolimits_{\mathbb{Z}[\pi _{1}(X)]}%
\mathbb{Z}[\pi _{1}(Y)]\rightarrow 0.
\end{eqnarray*}%
It follows that

\begin{lemma}
\label{lxba} The plus construction $(X\vee (S^{2})^{\vee n})^{+}$ of the
wedge of $X$ and $n$ copies of $S^{2},$ taken with respect to $P$, is
homotopy equivalent to the $2$-skeleton of $X^{+}.$
\end{lemma}

\bigskip

The following lemma is from Johnson \cite{Jo} (59.4, p.228). Although the
original version is stated for complexes with finite groups, it does hold
for complexes with finitely presentable groups (\textit{cf.} \cite{Jo},
appendix B and Mannan \cite{ma1}).

\begin{lemma}
\label{ge}Let $Y$ be a finite 3-dimensional CW complex of cohomological
dimension 2. If the reduced chain complex of the universal cover 
\begin{equation*}
0\rightarrow C_{2}(\tilde{Y})/C_{3}(\tilde{Y})\rightarrow C_{1}(\tilde{Y}%
)\rightarrow \mathbb{Z}\pi _{1}(Y)\rightarrow \mathbb{Z}\rightarrow 0
\end{equation*}%
is homotopy equivalent to the chain complex of the universal cover of a $2$%
-dimensional CW complex $X$, then $Y$ is homotopy equivalent to $X$.
\end{lemma}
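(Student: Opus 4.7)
The plan is to realise the given chain homotopy equivalence geometrically as a cellular map $f\colon X\to Y$, and then upgrade $f$ to a homotopy equivalence via Whitehead's theorem. The hypothesised equivalence is between chain complexes of $\mathbb{Z}G$-modules with $G=\pi_{1}(X)=\pi_{1}(Y)$, so an identification of fundamental groups is implicit in the data.

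I would build $f$ by induction on skeleta. After choosing basepoints, realise the chosen $\pi_{1}$-identification by a cellular map $f^{(1)}\colon X^{(1)}\to Y^{(1)}$. For each $2$-cell $D^{2}$ of $X$ with attaching word $w$, the word $f^{(1)}_{\ast}(w)$ is a relator of $\pi_{1}(Y)$ and is therefore null-homotopic in $Y$; hence $f^{(1)}$ extends over every $2$-cell to give a cellular map $f\colon X\to Y$.

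The main obstacle is to arrange, after possibly modifying $f$, that the induced chain map $\tilde{f}_{\ast}\colon C_{\ast}(\tilde{X})\to C_{\ast}(\tilde{Y})$ coincides up to chain homotopy with the hypothesised equivalence lifted through a splitting $s\colon C_{\ast}(\tilde{Y})/C_{3}(\tilde{Y})\to C_{\ast}(\tilde{Y})$ supplied by Lemma \ref{2.2}. The tools available are the standard comparison principle for chain maps between projective resolutions, and the freedom to alter the extension of $f$ over any $2$-cell by an element of $\pi_{2}(Y)=H_{2}(\tilde{Y})$ without changing its restriction to the $1$-skeleton. Since the projection $C_{\ast}(\tilde{Y})\to C_{\ast}(\tilde{Y})/C_{3}(\tilde{Y})$ is itself a chain homotopy equivalence (with $s$ as a quasi-inverse), matching $\tilde{f}_{\ast}$ after projection is equivalent to matching before, and $\tilde{f}_{\ast}$ becomes a chain equivalence.

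Once this is arranged, $\tilde{f}\colon\tilde{X}\to\tilde{Y}$ is a homology isomorphism between simply connected CW complexes, hence a weak equivalence by Hurewicz and a homotopy equivalence by Whitehead; since $f$ induces the given identification on $\pi_{1}$, $f$ itself is a homotopy equivalence, as required. The step I expect to be hardest is the simultaneous adjustment of the $2$-cell extensions so that the realised chain map genuinely matches the hypothesised equivalence up to chain homotopy; this is precisely where the direct-summand decomposition of $C_{2}(\tilde{Y})$ from Lemma \ref{2.2} is used in an essential way.
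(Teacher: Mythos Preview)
The paper does not supply its own proof of this lemma; it is quoted from Johnson \cite{Jo} and Mannan \cite{ma1}. Your sketch follows essentially the strategy of those references: construct a cellular map $f\colon X\to Y$ by skeleta, adjust it so that the induced chain map is a chain equivalence, and finish with Whitehead's theorem.

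Your outline is correct, and the step you flag as hardest does go through. Concretely: once $f$ is built, both $\tilde f_\ast$ and the lift $s\circ\varphi$ of the hypothesised equivalence are chain maps $C_\ast(\tilde X)\to C_\ast(\tilde Y)$ over $\mathrm{id}_{\mathbb Z}$, so the comparison lemma for partial projective resolutions yields a chain homotopy $h$ through degree~$1$. In degree~$2$ one computes that $\tilde f_2-(s\circ\varphi)_2-h_1\partial_2^X$ lands in $\ker\partial_2^Y$. Re-choosing the extension of $f$ over a $2$-cell $e$ by an element of $\pi_2(Y)\cong H_2(\tilde Y)$ changes $\tilde f_2(e)$ by an arbitrary class in $\ker\partial_2^Y/\mathrm{im}\,\partial_3^Y$; hence one can force $\tilde f_2-(s\circ\varphi)_2-h_1\partial_2^X\in\mathrm{im}\,\partial_3^Y$, completing the chain homotopy with some $h_2\colon C_2(\tilde X)\to C_3(\tilde Y)$. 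Thus $\tilde f_\ast$ is a chain equivalence and Whitehead applies. The splitting from Lemma~\ref{2.2} is used exactly where you indicate, to make $s\circ\varphi$ a genuine chain map into $C_\ast(\tilde Y)$.
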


\section{Wall's D(2) problem and its stable version\label{sec}}

In this section, we apply the results obtained in the previous section to
study the $D(2)$ problem. Let us recall the $D(2)$ problem raised in \cite%
{wa}.

\begin{conjecture}
(The $D(2)$ problem) If $X$ is a finite 3-dimensional CW complex of
cohomological dimension at most $2$, then $X$ is homotopy equivalent to a
2-dimensional CW complex.
\end{conjecture}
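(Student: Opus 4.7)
The plan is to reduce the topological $D(2)$ problem to a purely algebraic realization question about chain complexes, exploiting the structural results established earlier in the paper.

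First, I would apply Lemma \ref{2.2} to the hypothesis $\mathrm{cd}(X)\leq 2$: this yields a splitting of $C_3(\tilde X)$ off $C_2(\tilde X)$ as $\mathbb{Z}\pi_1(X)$-modules. Quotienting out $C_3(\tilde X)$ then produces the truncated chain complex
\begin{equation*}
0 \rightarrow C_2(\tilde X)/C_3(\tilde X) \rightarrow C_1(\tilde X) \rightarrow \mathbb{Z}\pi_1(X) \rightarrow \mathbb{Z} \rightarrow 0,
\end{equation*}
which is chain homotopy equivalent to $C_*(\tilde X)$. By Lemma \ref{ge}, it would suffice to exhibit a finite $2$-dimensional CW complex $X'$ with $\pi_1(X')\cong \pi_1(X)$ whose cellular chain complex on the universal cover is chain homotopy equivalent to the displayed truncated complex; then $X\simeq X'$ and we would be done.

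The next step would be to attempt the realization: build $X'$ from a presentation $2$-complex for $\pi_1(X)$ and then wedge on $2$-cells to match the middle module $C_2(\tilde X)/C_3(\tilde X)$. Theorem \ref{lxba} already carries this out \emph{stably}, in the sense that after wedging $X$ with finitely many copies of $S^2$ one can realize a $2$-complex (this is the stable $D(2)$ property developed in Section \ref{sec}). So a natural strategy is: start from the stable realization $X\vee (S^2)^n$ known to be $2$-dimensional, and then try to cancel the $n$ spherical summands, one at a time, using the hypothesis $\mathrm{cd}(X)\leq 2$ to control the $k$-invariant of the resulting $2$-complex.

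The hard part, which I expect to be the genuine obstacle and the reason the $D(2)$ problem is open, is precisely this \emph{destabilization}. Cancelling $S^2$ summands up to homotopy equivalence amounts to a cancellation problem for projective $\mathbb{Z}\pi_1(X)$-modules of the form $P\oplus \mathbb{Z}\pi_1(X)^n \cong Q \oplus \mathbb{Z}\pi_1(X)^n \Rightarrow P\cong Q$, together with a lifting problem for chain equivalences to homotopy equivalences of CW complexes. For finite groups, stable freeness of $P$ can fail (Swan modules), and for infinite groups, even less is known; without further input on $\pi_1(X)$, this step simply cannot be pushed through by general means. This is why in the paper one only obtains the weaker conclusion in Theorem \ref{th3}, where the integer $n$ of spheres must be allowed to depend on $G$ rather than being eliminated.
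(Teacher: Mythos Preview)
The statement you were asked to prove is labeled a \emph{Conjecture} in the paper, and indeed the paper does not prove it: the $D(2)$ problem is presented as open, and the paper's contribution is to establish only the \emph{stable} version $D(2,n)$ (Theorem \ref{th3}, Theorem \ref{igia}) and the conditional $D(2,1)$ statement assuming the normal generation conjecture (Theorem \ref{thq}). So there is no proof in the paper to compare your proposal against.

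That said, your write-up is not really a proof attempt either, and you seem fully aware of this: you correctly reduce to the realization question via Lemma \ref{2.2} and Lemma \ref{ge}, note that the stable version goes through, and then identify destabilization as the genuine obstruction. This diagnosis matches the paper's own viewpoint. One small refinement: to say that Theorem \ref{lxba} ``already carries this out stably'' for an arbitrary $X$ with $\mathrm{cd}(X)\leq 2$, you implicitly need Lemma \ref{ma} (Mannan) to first recognize $X$ as a plus construction of a $2$-complex; and the obstruction to cancelling the $S^2$ summands is not purely a module-cancellation issue but a question about chain-homotopy types of algebraic $2$-complexes (cf.\ the role of \cite{Gr} in the proof of Theorem \ref{igia}). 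With those caveats, your analysis of where and why the argument stalls is accurate.
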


In \cite{Jo}, Johnson proposes to systematically study the problem by
parameterizing 3-dimensional CW complexes by their fundamental groups. For a
finitely presentable group $G$, we say the $D(2)$ problem is true for $G$,
if any finite 3-dimensional CW complex $X,$ of cohomological dimension at
most $2$ with fundamental group $\pi _{1}(X)=G,$ is homotopy equivalent to a
2-dimensional CW complex.

The $D(2)$ problem is very difficult in general. It is known to be true for
a limited amount of groups (for an updated state, see \cite{Ed} \cite{Ma}
and \cite{jo2}, p. 261). We propose the following stable version by allowing
taking wedge with copies of $S^{2}$.

\begin{conjecture}
\label{stable}(The $D(2,n)$ problem) Let $n\geq 0$ be an integer. If $X$ is
a finite 3-dimensional CW complex of cohomological dimension at most$\ 2$,
then $X\vee (S^2)^{\vee n}$ is homotopy equivalent to a 2-dimensional CW
complex.
\end{conjecture}

For a finitely presentable group $G$ and an integer $n\geq 0$, we say that $%
G $ has the $D(2,n)$ property (or the $D(2,n)$ problem holds for $G$) if
Conjecture \ref{stable} is true for all those $X$ with fundamental group $G$%
. The $D(2,0)$ problem is the original $D(2)$ problem. It is immediate that
property $D(2)$ implies $D(2,n);$ and $D(2,n)$ implies $D(2,n+1)$ for any
group $G$ and any integer $n\geq 0$.

We now study the relation between the stabilization by wedging\ copies of $%
S^{2}$ with that by attaching\ 3-cells.

\begin{proposition}
\label{prop}Suppose that $X$ is a finite 3-dimensional CW complex of
cohomological dimension at most $2$. Then $X\vee (S^2)^{\vee n}$ is homotopy
equivalent to a finite 2-dimensional CW complex if and only if $X$ is
homotopy equivalent to a 3-dimensional CW complex with $n$ 3-cells.
\end{proposition}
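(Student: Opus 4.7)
The proposition is an ``if and only if'', so my plan is to treat the two directions separately. The forward direction will be handled by a mapping cone argument, and the reverse by the chain complex strategy already used in the proof of Theorem~\ref{lxba}.

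For the forward direction, suppose $X\vee (S^{2})^{n}\simeq Y$ with $Y$ a finite $2$-dimensional CW complex. The inclusion $j\colon (S^{2})^{n}\hookrightarrow X\vee (S^{2})^{n}$ is a cofibration admitting the retraction that collapses $X$ to the basepoint, so its cofiber is homotopy equivalent to $(X\vee (S^{2})^{n})/(S^{2})^{n}=X$. Composing $j$ with the homotopy equivalence to $Y$ yields a map $f\colon (S^{2})^{n}\to Y$; by homotopy invariance of the mapping cone, $\mathrm{Cone}(f)\simeq \mathrm{Cone}(j)\simeq X$. Since $\mathrm{Cone}(f)$ is obtained from $Y$ by attaching precisely $n$ 3-cells (one per wedge summand), $X$ is homotopy equivalent to a finite $3$-dimensional CW complex with exactly $n$ 3-cells.

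For the reverse direction, suppose $X\simeq X'$ with $X'$ a finite $3$-dimensional CW complex having exactly $n$ 3-cells, and write $R=\mathbb{Z}\pi _{1}(X')$. Since $X'$ still has cohomological dimension at most $2$, Lemma~\ref{2.2} yields a splitting $C_{2}(\tilde{X}')\cong R^{n}\oplus K$ in which $\partial _{3}^{X'}$ identifies $C_{3}(\tilde{X}')$ with the first summand, and the relation $\partial _{2}\partial _{3}=0$ then forces $\partial _{2}^{X'}$ to vanish on this $R^{n}$. The cellular chain complex of the universal cover of $X'\vee (S^{2})^{n}$ is then
\[
R^{n}\xrightarrow{\partial _{3}}R^{n}\oplus K\oplus R^{n}\xrightarrow{\partial _{2}}C_{1}(\tilde{X}')\xrightarrow{\partial _{1}}R,
\]
where $\partial _{3}$ is the inclusion into the first summand and $\partial _{2}$ annihilates both copies of $R^{n}$ (the outer one coming from the wedged spheres). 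Quotienting degree~$2$ by the image of $\partial _{3}$ leaves $0\to K\oplus R^{n}\to C_{1}(\tilde{X}')\to R\to 0$, which, after swapping the two remaining summands, coincides literally with the chain complex of the $2$-skeleton $(X')^{(2)}$. Lemma~\ref{ge} then delivers $X'\vee (S^{2})^{n}\simeq (X')^{(2)}$, a $2$-dimensional CW complex.

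The forward direction is essentially formal. The one delicate point is the reverse direction, where the reduced chain complex must be identified with that of a genuine $2$-dimensional CW complex; the key observation is that the direct summand structure supplied by Lemma~\ref{2.2} together with $\partial _{2}\partial _{3}=0$ makes the two $R^{n}$ summands in degree $2$ interchangeable without disturbing the boundary map, so after the swap one recognizes the chain complex of $(X')^{(2)}$ on the nose and Lemma~\ref{ge} applies. Beyond this observation I do not foresee further obstacles.
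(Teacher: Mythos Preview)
Your argument is correct in both directions. The forward direction is essentially the same as the paper's: both attach $n$ $3$-cells to the $2$-complex $Y$ and identify the result with $X$. You package this as homotopy invariance of the mapping cone, whereas the paper writes down the attaching maps explicitly via the splitting $\pi_{2}(Y)\cong\pi_{2}(X)\oplus(\mathbb{Z}G)^{n}$ and then invokes Whitehead; these are two phrasings of the same construction.

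The reverse direction, however, follows a genuinely different route. The paper appeals to Mannan's result (Lemma~\ref{ma}) that any finite $3$-complex of cohomological dimension $2$ is a plus construction of its $2$-skeleton, observes that the perfect kernel is normally generated by $n$ elements (one per $3$-cell), and then feeds this into Theorem~\ref{lxba}. You bypass the plus-construction machinery entirely: using only the splitting from Lemma~\ref{2.2} and the relation $\partial_{2}\partial_{3}=0$, you identify the reduced chain complex of $X'\vee(S^{2})^{n}$ directly with that of $(X')^{(2)}$ and invoke Lemma~\ref{ge}. Your approach is more self-contained (it does not need Lemma~\ref{ma}) and makes the cancellation of the two $R^{n}$ summands completely transparent; the paper's approach, on the other hand, situates the result within the plus-construction framework that drives the rest of the article. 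Both land on Lemma~\ref{ge} at the end, so the difference is in how one arrives at the required chain-level identification.
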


\begin{proof}
Assume that $X$ is homotopy equivalent to a 3-dimensional CW complex $%
X^{\prime }$ with $n$ 3-cells. Denote by $X^{\prime (2)}$ the $2$-skeleton
of $X^{\prime }$ and let $Z=X^{\prime }\vee (S^{2})^{n}.$ It is not hard to
see that the reduced chain complex 
\begin{equation*}
0\rightarrow C_{2}(\tilde{Z})/C_{3}(\tilde{Z})\rightarrow C_{1}(\tilde{Z}%
)\rightarrow \mathbb{Z}\pi _{1}(Z)\rightarrow \mathbb{Z}\rightarrow 0
\end{equation*}%
is homotopy equivalent to the chain complex of the universal cover of $%
X^{\prime (2)}$. By Lemma \ref{ge}, $X\vee (S^{2})^{\vee n}$ is homotopy
equivalent to a 2-dimensional CW complex.

Conversely, suppose that $X\vee (S^2)^{\vee n}$ is homotopy equivalent to a
finite 2-complex $Y$ via a map $f:X\vee (S^2)^{\vee n}\rightarrow Y$. It is
clear that 
\begin{equation*}
\pi _{1}(X)=\pi _{1}(X\vee (S^2)^{\vee n})\cong \pi _{1}(Y).
\end{equation*}%
Let $G=\pi _{1}(X)$ and $\tilde{X},\tilde{Y}$ be the universal covering
spaces of $X,Y$ respectively. By the Hurewicz theorem, we have isomorphisms 
\begin{equation*}
\pi _{2}(Y)\cong \pi _{2}(\tilde{Y})\cong H_{2}(\tilde{Y})\cong \pi _{2}(%
\tilde{X})\oplus \mathbb{Z}G^{n}.
\end{equation*}%
Therefore, there are $n$ maps $f_{i}:S^{2}\rightarrow Y,1\leq i\leq n,$
corresponding to the inclusion onto the second factor (for a fixed basis of $%
\mathbb{Z}G^{n}$) 
\begin{equation*}
\mathbb{Z}G^{n}\rightarrow H_{2}(\tilde{Y})\cong \pi _{2}(\tilde{X})\oplus 
\mathbb{Z}G^{n}.
\end{equation*}%
Attaching 3-cells to $Y$ along these $f_{i}$ $(1\leq i\leq n)$, we obtain a
3-dimensional CW complex $Y\cup _{i=1}^{n}e_{i}^{3}$. Let $i:X\overset{i}{%
\rightarrow }X\vee (S^2)^{\vee n}$ be the natural inclusion. By our
construction, the canonical composition 
\begin{equation*}
f^{\prime }:X\overset{i}{\rightarrow }X\vee (S^2)^{\vee n}\overset{f}{%
\rightarrow }Y\rightarrow Y\cup _{i=1}^{n}e_{i}^{3}
\end{equation*}%
induces isomorphisms on both $\pi _{1}$ and $\pi _{2}$ (the same as the
second homology groups of the universal covers). It is not hard to see that 
\begin{equation*}
H_{3}(\tilde{X})=H_{3}(\widetilde{Y\cup _{i=1}^{n}e_{i}^{3}})=0.
\end{equation*}
Therefore, $f^{\prime }$ induces a homotopy equivalence between the chain
complexes of the universal covering spaces. By the Whitehead theorem, $%
f^{\prime }$ is a homotopy equivalence.
\end{proof}

\begin{proof}[Proof of Theorem \protect\ref{th1} (i) and (ii)]
By Proposition \ref{prop}, (i) is equivalent to (ii). We prove (ii) as
follows. By a result of Mannan \cite{ma}, $X$ is the plus construction of a
finite 2-complex $Y$ with respect to a perfect normal subgroup $P\leq \pi
_{1}(Y).$ Therefore, we have a short exact sequence of groups 
\begin{equation*}
1\rightarrow P\rightarrow \pi _{1}(Y)\rightarrow \pi _{1}(X)\rightarrow 1.
\end{equation*}

Since $\pi _{1}(Y)/P=\pi _{1}(X)$ is finite and $Y$ is finite, the covering
space of $Y$ with fundamental group $P$ is again a finite CW complex. Hence $%
P$ is finitely generated. If the normal generation conjecture (Conjecture %
\ref{ng}) holds, $P$ is normally generated by a single element. Lemma \ref%
{lxba} says that $X\vee S^{2}$ is homotopy equivalent to a 2-dimensional CW
complex.
\end{proof}

\bigskip

Without the assumption of the Wiegold conjecture we only know that a finite
group G has property $D(2,n)$ for $n=\max \{1,1-\mathrm{def}(G)-\mu
_{2}(G)\},$ which follows the Swan-Jacobinski theorem in \cite{Jo} 29.3,
29.4 and Browning's results \cite{br}.

\section{Partial Euler characteristic and the Whitehead conjecture}

Recall definitions of $\mu _{n}(F)$ for an algebraic $n$-complex $F_{\ast }$
and $\mu _{n}(G)$ from Introduction. For a finitely presentable group $G$,
the following lemma follows from Swan \cite{sw} easily.

\begin{lemma}
\label{agif} Assume that $G$ is finitely presentable. The invariant $\mu
_{2}(G)$ can be realized by an algebraic 2-complex. In other words, there
exists an algebraic 2-complex 
\begin{equation*}
F_{2}\rightarrow F_{1}\rightarrow F_{0}\rightarrow \mathbb{Z}\rightarrow 0
\end{equation*}%
such that 
\begin{equation*}
\mu _{2}(G)=\dim _{\mathbb{Z}G}F_{2}+\dim _{\mathbb{Z}G}F_{0}-\dim _{\mathbb{%
Z}G}F_{1}.
\end{equation*}
\end{lemma}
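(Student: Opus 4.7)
The plan is to show that the infimum defining $\mu_2(G)$ is actually attained, by reducing to the elementary fact that a set of integers bounded below has a minimum. The argument consists of three observations.

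First, although $\mu_2(G)$ is defined as an infimum over full free resolutions with $f_0,f_1,f_2$ finite, any algebraic 2-complex $F_2 \to F_1 \to F_0 \to \mathbb{Z} \to 0$ with $F_i$ finitely generated free extends to such a resolution simply by inductively choosing (possibly infinite rank) free modules surjecting onto the successive kernels; the extension contributes nothing to $\mu_2$. Hence $\mu_2(G)$ equals the infimum of $f_2 - f_1 + f_0$ taken over all algebraic 2-complexes.

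Second, I verify that $\mu_2(G)$ is a finite integer. The upper bound comes from finite presentability: any presentation $\langle g_1,\ldots,g_d \mid r_1,\ldots,r_k \rangle$ of $G$ yields, via the standard Fox calculus construction, an algebraic 2-complex with ranks $(1,d,k)$, so $\mu_2(G) \leq k-d+1$. For the lower bound, tensor any algebraic 2-complex with $\mathbb{Q}$ over $\mathbb{Z}G$ (viewing $\mathbb{Q}$ as a trivial module) to obtain a chain complex $\mathbb{Q}^{f_2} \to \mathbb{Q}^{f_1} \to \mathbb{Q}^{f_0}$. Since its $H_0$ and $H_1$ compute $H_0(G;\mathbb{Q}) = \mathbb{Q}$ and $H_1(G;\mathbb{Q}) = G^{\mathrm{ab}} \otimes \mathbb{Q}$ (these depend only on the bottom of a resolution), the Euler characteristic identity
$$f_0 - f_1 + f_2 = 1 - \dim_{\mathbb{Q}} H_1(G;\mathbb{Q}) + \dim_{\mathbb{Q}} \ker(\partial_2 \otimes \mathbb{Q}) \geq 1 - \dim_{\mathbb{Q}} H_1(G;\mathbb{Q})$$
furnishes a finite lower bound on $\mu_2(F)$ depending only on $G$.

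Combining the two steps: the values $\mu_2(F) \in \mathbb{Z}$ are uniformly bounded below, so the infimum $\mu_2(G)$ is attained by some algebraic 2-complex, which is precisely the desired conclusion. The main subtle point is step two --- confirming the correct rational homology interpretation from only the truncated complex --- but this is routine since $H_0$ and $H_1$ of a group depend only on the tail of a free resolution in degrees $\leq 2$, exactly what an algebraic 2-complex provides.
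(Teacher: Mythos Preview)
Your proof is correct and follows essentially the same route as the paper's: the paper simply cites Swan's Theorem~1.2 with $M=\mathbb{Q}$ to conclude that $\mu_2(G)$ is finite (and hence, being an infimum of integers, attained), whereas you unpack that citation by giving the $\mathbb{Q}$-tensoring Euler-characteristic argument explicitly. The only additional content in your write-up is the routine observation that the infimum over full resolutions agrees with the infimum over algebraic $2$-complexes, which the paper leaves implicit.
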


\begin{proof}
It is enough to notice that $\mu _{2}(G)$ is finite by Theorem 1.2 in \cite%
{sw}.
\end{proof}

\bigskip

\begin{proof}[Proof of Theorem \protect\ref{th1} (iii)]
We prove a more general result: if a finitely presentable group $G$
satisfies the $D(2,n)$ problem, then 
\begin{equation*}
\mathrm{def}(G)\geq (1-n)-\mu _{2}(G).
\end{equation*}
By Lemma \ref{agif}, we can choose an algebraic 2-complex 
\begin{equation*}
(F_{\ast }):F_{2}\rightarrow F_{1}\rightarrow F_{0}\rightarrow \mathbb{Z}%
\rightarrow 0
\end{equation*}%
such that 
\begin{equation*}
\mu _{2}(G)=\dim _{\mathbb{Z}G}F_{2}+\dim _{\mathbb{Z}G}F_{0}-\dim _{\mathbb{%
Z}G}F_{1}.
\end{equation*}%
Since every algebraic 2-complex is geometric realizable by a $3$-dimensional
CW complex (cf. Johnson \cite{Jo} Theorem 60.2), there is a finite
3-dimensional CW complex of cohomological dimension 2 such that the reduced
chain complex 
\begin{equation*}
C_{2}(\tilde{Y})/C_{3}(\tilde{Y})\rightarrow C_{1}(\tilde{Y})\rightarrow 
\mathbb{Z}\pi _{1}(Y)\rightarrow \mathbb{Z}\rightarrow 0
\end{equation*}%
is homotopy equivalent to $(F_{\ast }).$ Assuming that $G$ has the $D(2,n)$
property, the wedge $X\vee (S^{2})^{\vee n}$ is homotopy equivalent to a
2-dimensional CW complex, which gives a presentation of $G.$ This implies
that $\mu _{2}(G)+n\geq 1-\mathrm{def}(G),$ \textsl{i.e.} $\mathrm{def}%
(G)\geq (1-n)-\mu _{2}(G).$ When Wiegold's Conjecture holds, the complex $X$
has property $D(2,1),$ which gives (iii).
\end{proof}

\bigskip

It is possible to place $\mu _{2}(G)$ in the broader setting of $(G,n)$%
-complexes, as follows (\textit{cf.} \cite{Ha}). Recall that a $(G,n)$%
-complex is a finite $n$-dimensional CW complex $X$ with fundamental group $%
G $ and vanishing homotopy group $\pi _{i}(X)=0$ for $i=2,3,\cdots ,n-1$. In
particular, a $(G,2)$-complex is a usual finite 2-dimensional CW complex
with fundamental group $G$.

\begin{definition}
Let $G$ be a finitely presentable group. Define 
\begin{equation*}
\mu _{n}^{g}(G)=\min \{(-1)^{n}\chi (X)\mid X\text{ is a }(G,n)\text{-complex%
}\}.
\end{equation*}%
If there is no such $X$ exists, define $\mu _{n}^{g}(G)=+\infty .$ We call
that a $(G,n)$-complex $X$ with $(-1)^{n}\chi (X)=\mu _{n}^{g}(G)$ is a
complex realizing $\mu _{n}^{g}(G)$.
\end{definition}

A few observations are immediate. It is clearly true that $\mu _{n}(G)\leq
\mu _{n}^{g}(G).$ Therefore, $\mu _{n}^{g}(G)>-\infty $ since $\mu
_{n}(G)>-\infty $ (\textit{cf.} Swan \cite{sw}). Moreover, $\mu _{2}(G)=\mu
_{2}^{g}(G)$ if and only if $\mu _{2}(G)=1-\mathrm{def}(G).$

Now we study the partial Euler characteristic and deficiency for groups of
low geometric dimensions. Recall that for a group $G$, the classifying space 
$\mathrm{B}G$ of $G$ is defined as the connected CW complex with $\pi _{1}(%
\mathrm{B}G)=G$ and $\pi _{i}(\mathrm{B}G)=0,i\geq 2$. It is unique up to
homotopy.

\begin{theorem}
\label{lgba} Let $G$ be a group having a finite $n$-dimensional classifying
space $\mathrm{B}G.$ We have the following.

\begin{enumerate}
\item[(i)] $\mu _{n}(G)=\mu _{n}^{g}(G);$ In particular, $\mu _{2}(G)=1-%
\mathrm{def}(G)$ if $G$ has a finite $2$-dimensional $\mathrm{B}G;$

\item[(ii)] Any finite CW complex $X$ with $\pi_1(X)=G$ satisfying the
following properties:

\begin{description}
\item[a)] the dimension is at most $n+1;$

\item[b)] the cohomological dimension $\mathrm{cd}(X)$ is at most $n;$

\item[c)] if $n\geq 3,$ the homotopy group $\pi _{i}(X)=0$ for $2\leq i\leq
n-1;$

\item[d)] $(-1)^{n}\chi (X)=\mu _{n}^{g}(G),$
\end{description}

is homotopy equivalent to $\mathrm{B}G.$
\end{enumerate}
\end{theorem}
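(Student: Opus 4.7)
My plan is to prove the two parts in sequence.

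For part (i), the inequality $\mu_n(G) \leq \mu_n^g(G)$ is immediate: the cellular chain complex of the universal cover of any $(G,n)$-complex is an algebraic $n$-complex with the same partial Euler characteristic. For the reverse, I will fix an algebraic $n$-complex $F_*$ with $\mu_n(F_*) = \mu_n(G)$ and realize it geometrically. A higher-dimensional analog of Lemma \ref{ea2c} provides a finite $(n+1)$-dimensional CW complex $Y'$ of cohomological dimension at most $n$ with $\pi_1(Y') = G$ whose reduced chain complex is chain homotopy equivalent to $F_*$. The existence of a finite $n$-dimensional $\mathrm{B}G$ forces $G$ to have geometric dimension $\leq n$, so a $D(n)$-type reduction shows $Y'$ is homotopy equivalent to an $n$-dimensional CW complex $Y$, a $(G,n)$-complex with $(-1)^n \chi(Y) = \mu_n(F_*) = \mu_n(G)$. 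The ``in particular'' statement then follows because $\mu_2^g(G) = 1 - \mathrm{def}(G)$ unfolds from the definition of deficiency.

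For part (ii), assuming implicitly that $\pi_1(X) = G$, I will analyze the cellular chain complex of the universal cover $\tilde X$. Conditions (a) and (b), together with the natural generalization of Lemma \ref{2.2}, imply that $C_{n+1}(\tilde X)$ is a direct summand of $C_n(\tilde X)$; the resulting truncation is an algebraic $n$-complex $\overline C$ whose partial Euler characteristic equals $(-1)^n \chi(X)$, which equals $\mu_n(G)$ by (d) and part (i). Simple connectedness of $\tilde X$, condition (c), and the Hurewicz theorem give $H_i(\tilde X) = 0$ for $1 \leq i \leq n-1$, while the direct summand property gives $H_{n+1}(\tilde X) = 0$.

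The crux is to show $H_n(\tilde X) = 0$. Writing $K = H_n(\tilde X)$, I will trim $\overline C$ by replacing the top term $C_n(\tilde X)/C_{n+1}(\tilde X)$ with $\ker(d_{n-1})$, which is projective since $\mathrm{cd}(G) \leq n$. This produces another algebraic $n$-complex $\overline C'$ with $\mu_n(\overline C') = \mu_n(\overline C) - \mathrm{rank}(K)$, so minimality of $\mu_n(\overline C) = \mu_n(G)$ forces $K$ to be stably zero. A further argument, exploiting the geometric origin of $K = \pi_n(\tilde X)$ together with the finite $n$-dimensional $\mathrm{B}G$ hypothesis, promotes this to $K = 0$. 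Once $K = 0$, the universal cover $\tilde X$ has trivial reduced homology and is simply connected, hence contractible by Whitehead's theorem, proving $X \simeq \mathrm{B}G$. The main obstacle is precisely this last step: upgrading stable vanishing of $K$ to actual vanishing, a standard subtlety about stably free $\mathbb{Z}G$-modules, which I expect to handle by combining the algebraic optimality of $\overline C$ with the geometric constraints inherited from $X$.
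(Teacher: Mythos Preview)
Your approach has genuine gaps, and the paper's route is both different and more direct.

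\textbf{Part (i).} Your geometric realization argument invokes a ``$D(n)$-type reduction'' to pass from the $(n{+}1)$-dimensional realization $Y'$ to an $n$-dimensional $(G,n)$-complex. For $n\geq 3$ this is Wall's theorem, but for $n=2$ it is precisely the open $D(2)$ problem, so your argument is circular in exactly the case singled out in the statement. The paper avoids geometry entirely: it takes the cellular chain complex $C_*(\mathrm{E}G)$ of the given finite $n$-dimensional $\mathrm{B}G$ as a genuine free resolution of length $n$, and shows directly that any algebraic $n$-complex $F_*$ attaining $\mu_n(G)$ must be exact at the top. This is done by applying Schanuel's lemma to compare $F_*$ with $C_*(\mathrm{E}G)$, tensoring with $\mathbb{Z}$ over $\mathbb{Z}G$ to see that the two free modules on either side have equal rank, and then invoking Kaplansky's theorem (stably free $\mathbb{Z}G$-modules of rank zero are zero) to conclude that the kernel $J$ vanishes. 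Since $\mathrm{B}G$ is itself a $(G,n)$-complex with $(-1)^n\chi(\mathrm{B}G)=\mu_n(G)$, both inequalities follow.

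\textbf{Part (ii).} Your trimming step replaces the top term by $\ker(d_{n-1})$, which is only projective, so $\overline C'$ is not an algebraic $n$-complex in the sense of the paper (free modules are required), and ``$\mathrm{rank}(K)$'' is undefined for a general $\mathbb{Z}G$-module. More importantly, you explicitly flag the real difficulty---upgrading stable vanishing of $K$ to actual vanishing---without resolving it. The paper handles this with the \emph{same} Schanuel/Kaplansky argument used in (i): the reduced chain complex of $\tilde X$ is an algebraic $n$-complex with $\mu_n$ equal to $\mu_n(G)$, so comparing with $C_*(\mathrm{E}G)$ via Schanuel and applying Kaplansky forces $\ker d = \pi_n(X) = 0$ outright. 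No separate ``promotion'' step is needed; Kaplansky's theorem is precisely the missing ingredient you were looking for.
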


\begin{proof}
Let $\mathrm{E}G$ be the universal cover of $\mathrm{B}G$. Since $\mathrm{E}%
G $ is contractible, one obtains the exact cellular chain complex of $%
\mathrm{E}G$: 
\begin{equation*}
C_{\ast }(\mathrm{E}G):0\rightarrow C_{n}(\mathrm{E}G)\rightarrow C_{n-1}(%
\mathrm{E}G)\ldots \rightarrow \mathbb{Z}G\rightarrow 0.
\end{equation*}%
This gives a (truncated) free resolution of $G$. In order to prove (i), it
suffices to show that this resolution gives the minimal Euler characteristic 
$\mu _{n}(G)$ since we notice earlier that $\mu _{n}(G)\leq \mu _{n}^{g}(G)$.

Suppose that $\mu _{n}(G)$ is obtained from the following partial resolution
of finitely generated free $\mathbb{Z}G$-modules: 
\begin{equation*}
F_{\ast }:F_{n}\overset{d}{\rightarrow }F_{n-1}\ldots \rightarrow
F_{1}\rightarrow \mathbb{Z}G\rightarrow 0.
\end{equation*}%
We claim that $F_{\ast }$ is exact at $F_{n},$ \textsl{i.e.} $\ker d=0.$
Once this is proved, $C_{\ast }(\mathrm{E}G)$ and $F_{\ast }$ are chain
homotopic to each other and hence have the same Euler characteristic.

To prove the claim, let $J$ be the kernel of $d$. By Schanuel's lemma, there
is an isomorphism 
\begin{equation*}
J\oplus C_{n}(\mathrm{E}G)\oplus F_{n-1}\ldots \cong F_{n}\oplus C_{n-1}(%
\mathrm{E}G)\ldots .
\end{equation*}%
Applying the functor $-\otimes _{\mathbb{Z}G}\mathbb{Z}$ to both sides of
this isomorphism, we see that $\mu _{n}(F)=(-1)^{n}\chi (\mathrm{B}G)$ and $%
J\otimes _{\mathbb{Z}G}\mathbb{Z}=0$ by noticing the fact that the complex $%
F_{\ast }$ attains minimal Euler characteristic after multiplying $(-1)^{n}$
among all the algebraic $n$-complexes. This implies that $C_{n}(\mathrm{E}%
G)\oplus F_{n-1}\ldots $ and $F_{n}\oplus C_{n-1}(\mathrm{E}G)\ldots $ have
the same finite free $\mathbb{Z}G$-rank. By Kaplansky's theorem, $J$ is the
trivial $\mathbb{Z}G$-module(\textit{cf.} \cite{Ka}, p. 328). This proves
(i).

We now prove (ii). Let $C_{\ast }(\tilde{X})$ be the chain complex of the
universal covering space of $X.$ Since \textrm{cd}$(X)\leq n,$ $C_{n+1}(%
\tilde{X})$ is a direct summand of $C_{n}(\tilde{X}),$ by the same argument
given in Lemma \ref{2.2}. Let $F^{1}$ be the chain complex%
\begin{equation*}
F_{\ast }^{1}:C_{n}(\tilde{X})/C_{n+1}(\tilde{X})\overset{d}{\rightarrow }%
C_{n-1}(\tilde{X})\rightarrow \cdots \rightarrow C_{1}(\tilde{X})\rightarrow 
\mathbb{Z}G\rightarrow 0.
\end{equation*}%
It is not hard to see that $\pi _{n}(X)\cong \ker d.$ Note that 
\begin{equation*}
\mu _{n}(F^{1})=(-1)^{n}\chi (X)=\mu _{n}(G).
\end{equation*}%
By the same argument as the first part of the proof, we get $\ker d=0.$ This
implies that $\tilde{X}$ is $n$-connected. Since $H_{n+1}(\tilde{X})=0,$ $%
\tilde{X}$ is contractible and $X$ is homotopy equivalent to $\mathrm{B}G.$
\end{proof}

\begin{remark}
Under the condition of Theorem \ref{lgba}, Harlander and Jensen \cite{Ha}
already prove that a $(G,n)$-complex realizing $\mu _{n}^{g}(G)$ is homotopy
equivalent to $\mathrm{B}G.$ Note that a $(G,n)$-complex is a special case
of $X$ in Theorem \ref{lgba}.
\end{remark}

We conclude with an application. Suppose that $G$ is a finitely presentable
group and 
\begin{equation*}
\mathbf{P}=\langle x_{1},\cdots ,x_{n}\mid r_{1},\cdots ,r_{m}\rangle 
\end{equation*}%
is a presentation of $G$. Denote by $G_{\mathbf{P}}$ the group given by the
presentation $\mathbf{P}.$ From each finite $2$-dimensional CW complex $X$,
one shrinks a spanning tree in the 1-skeleton to make $X$ have only a single
0-cell and obtains a finite presentation of $\pi _{1}(X)$. Namely, the
1-cells correspond one-one to a set of generators while the 2-cells
correspond one-one to a set of relators. Therefore, any counter-example to
the Whitehead conjecture gives rise to a 2-complex with a single 0-cell. For
a presentation $\mathbf{P},$ we will denote by $\chi (\mathbf{P})=m-n+1.$ A
sub-presentation of $\mathbf{P}=\langle x_{1},\cdots ,x_{n}\mid r_{1},\cdots
,r_{m}\rangle $ is a presentation $\langle y_{1},\cdots ,y_{n^{\prime }}\mid
s_{1},\cdots ,s_{m^{\prime }}\rangle $ with each $y_{i}\in \{x_{1},\cdots
,x_{n}\}$ and each $s_{i}\in \{r_{1},\cdots ,r_{m}\}$ is only a word of $%
y_{1},\cdots ,y_{n^{\prime }}.$

\begin{lemma}
\label{spog} Suppose that $\mathbf{P}^{\prime }=\langle y_{1},\cdots
,y_{n^{\prime }}\mid s_{1},\cdots ,s_{m^{\prime }}\rangle $ is a
sub-presentation of $\mathbf{P}=\langle x_{1},\cdots ,x_{n}\mid r_{1},\cdots
,r_{m}\rangle $ of a group $G_{\mathbf{P}}$. If $\mathbf{P}^{\prime \prime }$
is another finite presentation of $G_{\mathbf{P^{\prime }}}$, then one can
obtain a presentation of $G_{\mathbf{P}}$ from $\mathbf{P}^{\prime \prime }$
by adding $n-n^{\prime }$ generators and $m-m^{\prime }$ relations. In
particular, if $\mathbf{P}$ realizes $\mu _{2}^{g}(G_{\mathbf{P}})$, then $%
\mathbf{P}^{\prime }$ realizes $\mu _{2}^{g}(G_{\mathbf{P}^{\prime }})$.
\end{lemma}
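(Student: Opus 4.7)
The plan is to prove the first part by Tietze transformations, replacing the sub-block $P'$ inside $P$ by $P''$ while retaining the extra generators and relations of $P$ that lie outside $P'$ (after a suitable substitution). Fix notation $P'' = \langle z_1, \dots, z_{n''} \mid t_1, \dots, t_{m''}\rangle$. Since $P'$ and $P''$ both present $G_{P'}$, there are words $u_i(z_1, \dots, z_{n''})$ representing each $y_i$ and $v_j(y_1, \dots, y_{n'})$ representing each $z_j$, such that $u_i(v_*(y)) = y_i$ and $v_j(u_*(z)) = z_j$ hold as consequences of the defining relations on either side. I would then propose the presentation
\[
Q = \langle z_1, \dots, z_{n''},\, x_{i_1}, \dots, x_{i_{n-n'}} \mid t_1, \dots, t_{m''},\, \widehat{r}_{k_1}, \dots, \widehat{r}_{k_{m-m'}}\rangle,
\]
where $x_{i_1}, \dots, x_{i_{n-n'}}$ are the generators of $P$ not lying in $P'$, and each $\widehat{r}_{k_\ell}$ is the relator $r_{k_\ell}$ of $P$ not in $P'$ with every occurrence of $y_i$ replaced by the word $u_i(z)$. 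This adds exactly $n-n'$ generators and $m-m'$ relations to $P''$.

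To verify that $Q$ presents $G$, I would run a chain of Tietze transformations starting from $P$: first introduce $z_1, \dots, z_{n''}$ together with defining relations $z_j = v_j(y)$; using these relations, each $s_\ell$ becomes derivable from the $t_j$ (rewritten in the $y$'s via $z_j = v_j(y)$), so the $s_\ell$ can be dropped. Next, derive $y_i = u_i(z)$ from the defining relations together with the $t_j$, use these to eliminate each $y_i$ by substituting $u_i(z)$ throughout the surviving relators (in particular the extra $r_{k_\ell}$ become $\widehat{r}_{k_\ell}$), and finally discard the now redundant defining relations $z_j = v_j(y)$. What is left is precisely $Q$, so $Q$ presents $G$. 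The main technical point is the bookkeeping at each substitution to ensure that the normal closure of the relations is preserved; this is routine Tietze manipulation but is the one part that must be written out with care.

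For the ``in particular'' statement, suppose $P$ realizes $\mu_2^g(G) = 1 - \mathrm{def}(G)$, so that $n - m = \mathrm{def}(G)$ is maximal among presentations of $G$. If $P'$ did not realize $\mu_2^g(G_{P'})$, there would exist a presentation $P''$ of $G_{P'}$ with $n'' - m'' > n' - m'$. By the first part, $Q$ would then be a presentation of $G$ with
\[
(n'' + n - n') - (m'' + m - m') = (n'' - m'') + (n - n') - (m - m') > n - m,
\]
contradicting the maximality of $\mathrm{def}(G)$. Hence $P'$ must realize $\mu_2^g(G_{P'})$. The main obstacle in the whole argument is the first part's Tietze bookkeeping; after that, the second part is pure arithmetic of deficiencies.
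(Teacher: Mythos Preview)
Your construction of $Q$ is exactly the presentation the paper builds: add the extra generators $x_{n'+1},\dots,x_n$ to $P''$, then adjoin the leftover relators $r_{m'+1},\dots,r_m$ with each $y_i$ replaced by a word $u_i(z)$ representing it in $G_{P'}$. The paper verifies that $Q$ presents $G$ by writing down the obvious surjection $G''\to G$ (where $G''$ is $P''$ with the free generators $x_{n'+1},\dots,x_n$ adjoined) and identifying its kernel as the normal closure of the substituted relators; your Tietze route is an alternative packaging of the same verification, and the deficiency arithmetic for the ``in particular'' clause is identical to the paper's.

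One genuine bookkeeping point in your Tietze chain deserves correction. You propose to drop the $s_\ell$ immediately after introducing the $z_j$ with $z_j=v_j(y)$ and the $t_k$, claiming $s_\ell$ is then derivable from $\{z_j=v_j(y)\}\cup\{t_k\}$. This can fail: take $P'=\langle y\mid y\rangle$ and $P''=\langle z\mid z\rangle$ with $v(y)=1$ the empty word; then modulo $z=v(y)$ the relation $t=z$ becomes trivial, and $s=y$ is certainly not a consequence. The fix is to first add the relations $y_i=u_i(z)$ (these \emph{are} derivable from $z_j=v_j(y)$ together with the $s_\ell$, since $u_i(v(y))=y_i$ holds modulo $N_s$); only then does $s_\ell(y)=s_\ell(u(z))$ follow from $y_i=u_i(z)$, and $s_\ell(u(z))=1$ from the $t_k$. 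After that you can drop the $s_\ell$, then drop $z_j=v_j(y)$ (now redundant by $v_j(u(z))=z_j$ modulo $N_t$), and finally eliminate the $y_i$. With this reordering your argument goes through and matches the paper's conclusion.
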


\begin{proof}
Re-indexing and re-naming if necessary, we assume that 
\begin{equation*}
y_{1}=x_{1},\cdots ,y_{n^{\prime }}=x_{n^{\prime }},n^{\prime }\leq n
\end{equation*}%
and 
\begin{equation*}
s_{1}=r_{1},\cdots ,s_{m^{\prime }}=r_{m^{\prime }},m^{\prime }\leq m.
\end{equation*}%
It is clear that the words corresponding to $s_{1},\cdots ,s_{m^{\prime }}$
do not involve $x_{n^{\prime }+1},\cdots ,x_{n}$. If 
\begin{equation*}
\mathbf{P}^{\prime \prime }=\langle y_{1}^{\prime },\cdots ,y_{u}^{\prime
}\mid s_{1}^{\prime },\cdots ,s_{v}^{\prime }\rangle
\end{equation*}%
is another presentation of $G_{\mathbf{P}^{\prime }}$, we form a group $%
G^{\prime \prime }$ with the presentation 
\begin{equation*}
\langle y_{1}^{\prime },\cdots ,y_{u}^{\prime },x_{n^{\prime }+1},\cdots
,x_{n}\mid s_{1}^{\prime },\cdots ,s_{v}^{\prime }\rangle
\end{equation*}%
by adding $n-n^{\prime }$ free generators to $\mathbf{P}^{\prime \prime }$.
For each $1\leq i\leq n^{\prime },$ the letter $x_{i},$ viewed as an element
in $G_{\mathbf{P}^{\prime }},$ has a lifting $w_{i}$ in the free group $%
\langle y_{1}^{\prime },\cdots ,y_{u}^{\prime }\rangle .$ In other words, we
choose $w_i$ on the generators $y_{1}^{\prime },\cdots ,y_{u}^{\prime }$
such that the bijection $x_i\mapsto w_i, 1\leq i\leq n^{\prime }$ induces an
isomorphism $G_{\mathbf{P}^{\prime}} \to G_{\mathbf{P}^{\prime \prime}}.$

For each $1\leq i\leq n,$ define the word $\omega _{i}$ of $\{y_{1}^{\prime
},\cdots ,y_{u}^{\prime },x_{n^{\prime }+1},\cdots ,x_{n}\}$ as 
\begin{equation*}
\omega _{i}=\left\{ 
\begin{array}{c}
w_{i},1\leq i\leq n^{\prime }; \\ 
x_{i},n^{\prime }<i\leq n.%
\end{array}%
\right.
\end{equation*}%
Denote by $\phi $ the bijection 
\begin{equation*}
\phi :\{x_{1},\cdots ,x_{n}\}\rightarrow \{\omega _{1},\cdots ,\omega _{n}\}
\end{equation*}%
given by $x_{i}\mapsto \omega _{i}.$ For each $m^{\prime }<i\leq m,$ write $%
r_{i}=\Pi _{j=1}^{k_{i}}x_{ij}$ as a reduced word of $\{x_{1},\cdots
,x_{n}\},$ where $x_{ij}\in \{x_{1}^{\pm },\cdots ,x_{n}^{\pm }\}.$ Let $%
r_{i}^{\prime }=\Pi _{j=1}^{k_{i}}\phi (x_{ij})$ be the corresponding word
of 
\begin{equation*}
\{y_{1}^{\prime },\cdots ,y_{u}^{\prime },x_{n^{\prime }+1},\cdots ,x_{n}\}.
\end{equation*}%
Let $K$ be the normal subgroup of $G^{\prime \prime }$ normally generated by
the $m-m^{\prime }$ elements $r_{m^{\prime }+1}^{\prime },\cdots
,r_{m}^{\prime }$. We obtain a short exact sequence of groups 
\begin{equation*}
1\rightarrow K\rightarrow G^{\prime \prime }\rightarrow G_{\mathbf{P}%
}\rightarrow 1,
\end{equation*}%
where the third arrow is induced by the map $G_{\mathbf{P}^{\prime
}}\rightarrow G_{\mathbf{P}}$ from the natural inclusions of generators and
relators. From this exact sequence, one obtains the desired presentation 
\begin{equation*}
\mathbf{P}_{0}=\langle y_{1}^{\prime },\cdots ,y_{u}^{\prime },x_{n^{\prime
}+1},\cdots ,x_{n}\mid s_{1}^{\prime },\cdots ,s_{v}^{\prime },r_{m^{\prime
}+1}^{\prime },\cdots ,r_{m}^{\prime }\rangle
\end{equation*}%
of $G_{\mathbf{P}}$.

Assume that $\mathbf{P}$ realizes $\mu _{2}^{g}(G_{\mathbf{P}})$, while a
sub-presentation $\mathbf{P}^{\prime }$ does not realize $\mu _{2}^{g}(G_{%
\mathbf{P}^{\prime }})$. Suppose that $\mu _{2}^{g}(G_{\mathbf{P}^{\prime
}}) $ is realized by a 2-dimensional complex $X,$ which gives a presentation 
$\mathbf{P}^{\prime \prime }.$ We obtain a new presentation $\mathbf{P}_{0}$
of $G_{\mathbf{P}}$ by adding relators and generators to $\mathbf{P}^{\prime
\prime }.$ However, 
\begin{equation*}
\chi (\mathbf{P}_{0})=\chi (\mathbf{P}^{\prime \prime })+m-m^{\prime
}-(n-n^{\prime })=\chi (\mathbf{P}^{\prime \prime })-\chi (\mathbf{P}%
^{\prime })+\chi (\mathbf{P})<\mu _{2}^{g}(G_{\mathbf{P}}).
\end{equation*}%
This is a contradiction to the fact that $\mathbf{P}$ realizes $\mu
_{2}^{g}(G_{\mathbf{P}})$. Therefore, $\mathbf{P}^{\prime }$ realizes $\mu
_{2}^{g}(G_{\mathbf{P}^{\prime }}).$
\end{proof}

\bigskip

Recall that a CW complex $X$ is aspherical if the universal cover $\tilde{X}$
is contractible. A famous conjecture of Whitehead says that any subcomplex $%
Y $ of an aspherical 2-dimensional complex $X$ is aspherical as well (for
more details, see the survey article \cite{Bo}). As an application of
results proved above, we give an equivalent condition of the asphericity of $%
Y$, as follows.

\begin{corollary}
\label{sxia} Suppose that $X$ is a finite aspherical 2-complex and $Y$ is a
subcomplex of $X$. We have the following.

\begin{enumerate}
\item[(i)] The complex $Y$ realizes $\mu _{2}^{g}(\pi _{1}(Y));$

\item[(ii)] The complex $Y$ is aspherical if and only if the fundamental
group $\pi _{1}(Y)$ has a finite classifying space $\mathrm{B}\pi _{1}(Y)$
of dimension at most 2.
\end{enumerate}
\end{corollary}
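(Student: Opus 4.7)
The plan for (i) is to translate the geometric statement into the language of group presentations and then invoke Lemma \ref{spog}. A finite 2-complex $X$ determines a presentation $P$ of $G:=\pi_{1}(X)$ via its cell structure: after collapsing a spanning tree of $X^{(1)}$, the remaining 1-cells become generators and the 2-cells become relators, and $\chi(X)=1-\mathrm{def}(P)$. The subcomplex $X'\subseteq X$ similarly determines a sub-presentation $P'$ of $P$ in the sense used in Lemma \ref{spog}, and $P'$ presents $G':=\pi_{1}(X')$. Since $X$ is a finite aspherical 2-complex, it is a finite 2-dimensional $\mathrm{B}G$, so Theorem \ref{lgba}(i) applies and yields $\chi(X)=\mu_{2}^{g}(G)$; hence $P$ realizes $\mu_{2}^{g}(G)$. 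Lemma \ref{spog} then gives that the sub-presentation $P'$ realizes $\mu_{2}^{g}(G')$, that is, $\chi(X')=\mu_{2}^{g}(G')$, which proves (i).

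For (ii), the forward implication is immediate: an aspherical finite 2-complex $X'$ is itself a 2-dimensional $\mathrm{B}G'$, so $G'$ has geometric dimension at most $2$. For the converse, suppose that $G'$ has geometric dimension at most $2$. The plan is to apply Theorem \ref{lgba}(ii) to $X'$ with $n=2$: conditions (a) (dimension at most $3$) and (b) ($\mathrm{cd}(X')\leq 2$) hold automatically because $X'$ is 2-dimensional, condition (c) is vacuous when $n=2$, and (d) is furnished by part (i) together with the equality $\mu_{2}^{g}(G')=\chi(\mathrm{B}G')$ coming from Theorem \ref{lgba}(i). The conclusion is $X'\simeq\mathrm{B}G'$, and hence $X'$ is aspherical.

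The main obstacle is that Theorem \ref{lgba} is stated under the hypothesis that $G'$ admits a \emph{finite} 2-dimensional $\mathrm{B}G'$, whereas the geometric dimension hypothesis a priori only supplies a possibly infinite 2-dimensional $\mathrm{B}G'$. To close this gap one uses that $G'$ is finitely presented (being $\pi_{1}$ of the finite complex $X'$): a Stallings--Swan style argument shows that a finitely presented group of cohomological dimension at most $2$ is of type $\mathrm{FP}$, giving a finite-type free resolution of $\mathbb{Z}$ of length $2$ over $\mathbb{Z}G'$, which combined with Lemma \ref{ea2c} and the geometric dimension bound can be upgraded to a finite 2-dimensional $\mathrm{B}G'$. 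Once this finiteness is established, the remainder of the argument is a direct application of the results developed earlier in the paper.
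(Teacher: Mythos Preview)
Your argument follows the paper's proof exactly: part (i) combines Theorem \ref{lgba}(i), applied to the aspherical $X$ to see that $X$ realizes $\mu_2^g(\pi_1(X))$, with Lemma \ref{spog} to pass to the sub-presentation; and part (ii) invokes Theorem \ref{lgba}(ii) for $X'$ with $n=2$. The paper's own proof does precisely this and stops there, without commenting on the finiteness hypothesis you isolate in your third paragraph.

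Your observation that Theorem \ref{lgba} is stated for groups possessing a \emph{finite} $2$-dimensional $\mathrm{B}G'$, whereas ``geometric dimension $\le 2$'' a priori only supplies a possibly infinite one, is well taken; the paper glosses over this point. However, your proposed patch does not close the gap. Lemma \ref{ea2c} realizes an algebraic $2$-complex by a finite \emph{$3$-dimensional} complex of cohomological dimension $2$, not by a $2$-complex, so it does not by itself produce a finite $2$-dimensional $\mathrm{B}G'$. It is true that a finitely presented $G'$ with $\mathrm{cd}(G')\le 2$ is of type $\mathrm{FP}$ (the second syzygy in any finite presentation complex is then finitely generated projective), but upgrading this to a finite \emph{free} resolution of length $2$ requires the relevant projective to be stably free, and upgrading further to a finite $2$-dimensional $K(G',1)$ is essentially a $D(2)$-type statement, not something that follows formally from the results assembled in the paper. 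So your third paragraph correctly flags a subtlety shared with the paper's proof, but the fix you sketch is not complete.
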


\begin{proof}
Since $X$ is aspherical, it realizes $\mu _{2}^{g}(\pi _{1}(X))$ by Theorem %
\ref{lgba}. Notice that $Y$ gives a presentation of $\pi _{1}(Y),$ which is
a sub-presentation of the presentation given by $X$. Lemma \ref{spog}
implies that $Y$ realizes $\mu _{2}^{g}(\pi _{1}(Y))$. This proves part (i).

If $Y$ is aspherical, it is $\mathrm{B}\pi _{1}(Y)$ and hence is of
dimension at most 2. Conversely, assume that $\pi _{1}(Y)$ has a finite
classifying space $\mathrm{B}\pi _{1}(Y)$ of dimension at most 2. By Theorem %
\ref{lgba}, all the $(\pi _{1}(Y),2)$-complexes realizing $\mu _{2}^{g}(\pi
_{1}(Y))$ are homotopic to $\mathrm{B}\pi _{1}(Y)$. Therefore, $Y$ is
aspherical by part (i).
\end{proof}

\bigskip

Corollary \ref{ncor} is Corollary \ref{sxia} (ii).

\bigskip

\noindent \textbf{Acknowledgements}

The second author is supported by Jiangsu Natural Science Foundation (No.
BK20140402) and NSFC (No.11501459,11771345,11771022).

\bigskip

\bigskip

Infinitus, Nanyang Technological University. E-mail: JIFENG@ntu.edu.sg

\bigskip

Department of Mathematical Sciences, Xi'an Jiaotong-Liverpool University,
111 Ren Ai Road, Suzhou, Jiangsu, China 215123. E-mail:
Shengkui.Ye@xjtlu.edu.cn

\end{document}